\newtheorem{thm}{Theorem}[section]
\newtheorem{prop}[thm]{Proposition}
\newtheorem{lem}[thm]{Lemma}
\newtheorem{cor}[thm]{Corollary}
\newtheorem{conj}[thm]{Conjecture}
\theoremstyle{definition}
\newtheorem{defn}[thm]{Definition}
\newtheorem{rmk}[thm]{Remark}
\DeclareMathAlphabet\EuR{U}{eur}{m}{n}
\SetMathAlphabet\EuR{bold}{U}{eur}{b}{n}
\newcommand{\Z}{\mathbb{Z}}
\newcommand{\pLoc}{_{(p)}}
\newcommand{\F}{\mathcal{F}}
\renewcommand{\H}{\mathcal{H}}
\newcommand{\K}{\mathcal{K}}
\newcommand{\Hom}{\operatorname{Hom}}
\newcommand{\Aut}{\operatorname{Aut}}
\newcommand{\mor}{\operatorname{mor}}
\newcommand{\defeq}{\overset{\text{\textup{def}}}{=}}
\newcommand{\Burnside}{\mathbf{A}}
\newcommand{\module}{\operatorname{mod}}
\newcommand{\incl}{\rm incl}
\newcommand{\id}{{\rm id}}
\newcommand{\Orb}{\mathsf{Orb}}
\newcommand{\PreOrb}{\mathsf{Pre\text{-}Orb}}
\newcommand{\la}{\langle}
\newcommand{\ra}{\rangle}
\begin{document}

\author{Sejong Park}
\address{Institute of Mathematics, University of Aberdeen, Aberdeen AB24 3UE, UK}
\email{s.park@abdn.ac.uk}
\author{K\'ari Ragnarsson}
\address{Department of Mathematical Sciences, Depaul University, 2320 N.~Kenmore Avenue, Chicago, IL 60614, USA}
\email{kragnars@math.depaul.edu}
\author{Radu Stancu}
\address{LAMFA, Universit\'e de Picardie, 33 rue St Leu, 80039, Amiens, France}
\email{radu.stancu@u-picardie.fr}

\thanks{The second author is partially supported by NSF grant DMS-1007619.}

\title{On the composition product of saturated fusion systems}

\date{February 25th, 2011}

\begin{abstract}
We say that a fusion system is the composition product of two subsystems if every morphism can be factored as a morphism in one fusion system followed by a morphism in the other. 
We establish a relationship between the characteristic idempotent of a saturated fusion system that is the composition product of saturated subsystems and the characteristic idempotents of the component systems. Consequently we obtain a compatibility result for transfer through the composition product and transfer through the component systems.
\end{abstract}

\maketitle

\section{Introduction and statement of results}
Given a finite group $G$ with subgroups $H$ and $K$, we write $G = HK$ if every element $g \in G$ can be written (non-uniquely) as $g = hk$ with $h \in H$ and $k \in K$. This is a rather special situation, as the composition product $HK$ for arbitrary subgroup $H, K \leq G$ is generally not even a group. Setting $L \defeq H \cap K$, the condition $G = HK$ is equivalent to having an isomorphism of $(K,H)$-bisets 
\begin{equation} \label{eq:G=HK}
  H \times_L K \cong {}_H G_K  \, ,
\end{equation}
where the subscripts denote restriction. Bisets act on Mackey functors and it follows that for a globally-defined contravariant Mackey functor $M$,
the maps
\begin{equation}  \label{map:HGK}
 M(H) \xrightarrow{tr_H^G} M(G) \xrightarrow{res_K^G} M(K) 
\end{equation}
and
\begin{equation} \label{map:HLK} 
 M(H) \xrightarrow{res_L^H} M(L) \xrightarrow{tr_L^K} M(K), 
\end{equation}
where $tr$ denotes transfer and $res$ denotes restriction, agree. This can be read as a very special case of the double coset formula in which there is only one double coset $G = H 1 K$.

In this note we carry the discussion in the preceding paragraph over to fusion systems. The motivation for this is twofold: First, the results proved here provide the compatibility of transfer needed in \cite{DGPS:YoshidaAndTate}, and although a proof for the special case needed there appears in  \cite{DGPS:YoshidaAndTate}, we now set the results in a broader context and give a more conceptual proof.  Second, this work is part of an ongoing project to reformulate fusion-theoretic phenomena in terms characteristic idempotents, which was initiated in \cite{KR:ClSpectra} and \cite{KRStancu:SFSasIdem}. 

We now proceed to state our main result, Theorem \ref{thm:main}, and record some interesting consequences. For the sake of brevity we assume the reader is familiar with the basic theory of fusion systems and define only the new concept of composition product used in this paper.  We follow the conventions established by Broto--Levi--Oliver in \cite{BLO2} for fusion systems, but also refer the reader to \cite{Puig:FrobeniusCategories} for the original reference. Other notation and terminology used in the statement of results will be recalled in Section \ref{sec:Notation}. 

In analogy with the situation $G= HK$ above, we make the following definition.
\begin{defn} \label{def:CompProduct}
Let $\F$ be a fusion system on a finite $p$-group $S$, and let $\H$ and $\K$ be fusion subsystems of $\F$ on subgroups $R$ and $T$ of $S$, respectively. We say that $\F$ is the \emph{composition product} of $\H$ and $\K$, and write ${}_R \F_T = \H \K$, if $S = RT$ and every morphism $P \xrightarrow{\varphi} Q$ in $\F$ with $P \leq T$ and $Q \leq R$ can be written as $\varphi = \eta \circ \kappa$ with $\kappa$ in $\K$ and $\eta$ in $\H$. 
\end{defn}

Just as in the group case, this is a special situation, and generally one should not expect to be able to take a composition product of two arbitrary fusion systems, even if they are defined on the same $p$-group. Note also that when $\F, \H$ and $\K$ are all defined on the same $p$-group $S$, the condition $\F = \H\K$ is stronger than saying that $\F$ is generated by the fusion systems $\H$ and $\K$ as the order of morphisms is specified in a composition product. However, if $\H$ or $\K$ is weakly normal in $\F$, then the conditions are equivalent. 

In the fusion setting, the biset $G$ in the discussion above is replaced by the characteristic idempotent $\omega_\F$ of a fusion system $\F$. The analogue of \eqref{eq:G=HK} is 
 \[ {}_R (\omega_\F)_T = \omega_\H \circ {}_R S_T  \circ \omega_\K \, ,\]
with subscripts denoting restriction. We conjecture that this is again equivalent to ${}_R \F_T=\H\K$, stated formally as follows.

\begin{conj} \label{running-conjecture}
Let $\F$ be a saturated fusion system on a finite $p$-group $S$. Let $R$ and $T$ be subgroups of $S$ such that $S = RT$, and let $\H$ and $\K$ be saturated fusion subsystems of $\F$ on $R$ and $T$, respectively.  Then ${}_R \F_T = \H \K$ if and only if
 \[ [R,\id]_S^R \circ \omega_\F \circ [T,\incl]_T^S = \omega_\H \circ [R,\id]_S^R \circ [T,\incl]_T^S \circ \omega_\K \, .\]
\end{conj}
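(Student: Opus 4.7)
The plan is to work in the $p$-local Burnside module $\Burnside(T,R) \otimes \Z_{(p)}$, where both sides of the proposed identity live, and to exploit the canonical basis of transitive bisets $[P,\varphi]_T^R$ indexed by pairs $(P,\varphi)$ with $P \leq T$ and $\varphi \colon P \hookrightarrow R$ (modulo the obvious equivalence). Using the known explicit expansions of $\omega_\F, \omega_\H, \omega_\K$ as rational linear combinations of transitive bisets supported on the respective $\F$-, $\H$-, $\K$-conjugacy classes of morphisms, I would expand both sides of the identity into this basis and treat the two implications separately.

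For the forward direction, assume ${}_R\F_T = \H\K$. The double-coset formula for the composition of transitive bisets expresses $\omega_\H \circ {}_R S_T \circ \omega_\K$ as a rational sum over triples consisting of a $\K$-morphism from a subgroup of $T$, an element $s \in S$ contributed by ${}_R S_T$, and an $\H$-morphism landing in $R$. By the hypothesis ${}_R\F_T = \H\K$, the composite morphisms realized this way are exactly the $\F$-morphisms from subgroups of $T$ into $R$, which are also the basis elements appearing in the expansion of $[R,\id]_S^R \circ \omega_\F \circ [T,\incl]_T^S$. What remains is to check that the coefficient on each such basic $(T,R)$-biset agrees on both sides: an orbit-counting identity comparing $\F$-conjugacy classes of morphisms with pairs of $\H$- and $\K$-conjugacy classes joined over intersection data coming from $R \cap T$. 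For the converse, I would argue by supports: the right-hand side is supported only on basic bisets whose underlying morphism factors as an $\H$-morphism after a $\K$-morphism, while the left-hand side is supported on all $\F$-conjugacy classes of $\F$-morphisms $P \to R$ with $P \leq T$; an argument using saturation then upgrades the resulting factorization-up-to-conjugacy to a genuine factorization, yielding ${}_R\F_T = \H\K$.

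I expect the main obstacle to be the coefficient matching in the forward direction, which requires careful bookkeeping across three distinct fusion systems together with the intersection combinatorics. A potentially cleaner alternative is to seek a uniqueness characterization of both sides as the unique element of $\Burnside(T,R)_{(p)}$ that is left $\H$-stable, right $\K$-stable, and agrees with $[R \cap T, \incl]_T^R$ (or a similarly normalized element) modulo contributions from proper subgroups; such a characterization, in the spirit of the idempotent uniqueness results of \cite{KRStancu:SFSasIdem}, would let one identify the two sides without ever computing basis coefficients explicitly. I would attempt this stability/uniqueness route first, and fall back on direct orbit counting only if a suitable uniqueness statement for non-square Burnside bimodules proves elusive.
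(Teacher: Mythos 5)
This statement is labelled a \emph{conjecture} in the paper, and the paper does not prove it: the authors explicitly postpone the general case, proving only the instance $R=S$ under the additional hypothesis that $\K$ is weakly normal in $\F$ (Theorem \ref{thm:main}). So your proposal should be judged as an attempt at an open problem, and as it stands it is a plan rather than a proof; the two places where you defer the work are exactly where the difficulty lives.

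In the forward direction, your support analysis is fine (since $S=RT$, the inner morphisms $c_x$ contributed by ${}_R S_T$ split as $c_r\circ c_t$ and can be absorbed into $\H$ and $\K$), but the statement ``what remains is to check that the coefficient on each such basic $(T,R)$-biset agrees on both sides'' is the entire content of the conjecture, and you give no mechanism for it. Note that both sides are automatically left $\H$-stable and right $\K$-stable, so your proposed ``uniqueness route'' cannot distinguish elements by those two properties alone; one needs the middle factor to interact correctly with both idempotents simultaneously. In the paper's proof of the $R=S$ case this is handled by reducing (via a fixed-point/stability reformulation, Proposition \ref{prop-right-side}) to showing $\omega_\K\circ[T,\id]_S^T\circ\omega_\H\circ[T,\incl]_T^S$ is right $\K$-stable, and that step genuinely uses weak normality of $\K$: it invokes a Frattini reduction to $\H=N_\H(T)$, a support lemma for $\omega_{N_\F(T)}$ (Lemma \ref{lem:idempotent-for-normalizer}), and the commutation $[T,\alpha]^T_T\circ\omega_\K=\omega_\K\circ[T,\alpha]^T_T$ from \cite{KRStancu:SFSasIdem}. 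Your outline has no substitute for these inputs in the general setting, and no weak normality hypothesis to call on.

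In the converse direction, expanding the right-hand side by the double coset formula yields, for each $\F$-morphism $\varphi\colon P\to R$ with $P\le T$, a factorization of the form $\varphi=\eta\circ c_x\circ\kappa$ with $x\in S$. When $R=S$ the conjugation $c_x$ lies in $\H$ and the factorization is genuine; for general $R$ you must again use $S=RT$ to split $c_x$, which works but should be said explicitly rather than attributed to ``an argument using saturation'' --- saturation plays no role there. The nonvanishing $\Phi_{\langle P,\varphi\rangle}(\omega_\F)\neq 0$ that starts this argument is a known property of characteristic idempotents and is the correct tool, so this half of your plan is essentially sound once the $c_x$ issue is addressed.
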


The most important instance of this conjecture is when $R = S$ (see Conjecture \ref{conj-right-side}), and our main result is a proof of this case under a normalcy condition on $\K$. 

\begin{thm} \label{thm:main}
Let $\F$ be a saturated fusion system on a finite $p$-group $S$.  Let $\H$ be a saturated subsystem of $\F$ on $S$, and let $\K$ be a saturated subsystem of $\F$ on a subgroup $T$. Assume $\K$ is weakly normal in $\F$. Then ${}_{S}\F_T = \H\K$ if and only if
\[
	\omega_\F \circ [T,\incl]_T^S = \omega_\H \circ [T,\incl]_T^S \circ \omega_\K \, .
\]
\end{thm}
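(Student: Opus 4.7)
The plan is to treat both sides as elements of the $p$-local Burnside module $\Burnside(T,S) \otimes \mathbb{Z}_{(p)}$ and exploit the unique expansion of any such element as a $\mathbb{Z}_{(p)}$-linear combination of transitive biset classes $[P,\psi]_T^S$ with $P \leq T$ and $\psi \colon P \to S$ a morphism, where $(P,\psi)$ is taken up to appropriate conjugacy. The two directions of the biconditional are handled separately, with the characterizing properties of $\omega_\F$ (right $\F$-stability plus the normalization determined by saturation) supplying the rigidity that converts a support-level statement into the desired identity.

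For the direction $(\Rightarrow)$, I would first establish an auxiliary identity, which I expect to be the heart of the matter, namely
\[
  \omega_\F \circ [T,\incl]_T^S \;=\; \omega_\F \circ [T,\incl]_T^S \circ \omega_\K,
\]
using the weak normality of $\K$ in $\F$. The point is that every $\F$-morphism with source in $T$ stays inside $T$ up to an $\F$-automorphism and respects $\K$-fusion, so composing with $\omega_\K$ on the right does not change the element. Granting this, the task reduces to
\[
  \omega_\F \circ [T,\incl]_T^S \circ \omega_\K \;=\; \omega_\H \circ [T,\incl]_T^S \circ \omega_\K.
\]
Expanding $\omega_\F$ in transitive biset classes, every $\F$-morphism $\varphi \colon P \to S$ with $P \leq T$ that contributes a term factors, by the composition product hypothesis, as $\varphi = \eta \circ \kappa$ with $\eta$ in $\H$ and $\kappa$ in $\K$. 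Right-composing with $\omega_\K$ absorbs the freedom in choosing $\kappa$, and then uniqueness of the characteristic idempotent (applied as an identification of elements that are $\H$-stable on the left and $\K$-stable on the right and agree suitably on fixed points) pins down the coefficients so that the RHS matches.

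For the direction $(\Leftarrow)$, I would read off the composition product property from the idempotent identity by expanding both sides in the transitive basis and comparing supports. Any $\F$-morphism $\varphi \colon P \to S$ with $P \leq T$ appears with nonzero multiplicity in the LHS, while every transitive biset appearing in the RHS arises from a factorization through a morphism in $\K$ followed by one in $\H$. Consequently every such $\varphi$ admits a factorization $\varphi = \eta \circ \kappa$ with $\kappa$ in $\K$ and $\eta$ in $\H$, which is exactly ${}_S\F_T = \H\K$. The slightly delicate point is that since coefficients are only $\mathbb{Z}_{(p)}$-valued, one must argue that the nonzero coefficient in the LHS indexed by $\varphi$ forces the corresponding factored morphism to occur on the RHS, not merely a conjugate of it; this is where one invokes that the transitive basis is genuinely a basis, so matching of class-summands is coefficientwise.

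The main obstacle I anticipate is proving the auxiliary identity
$\omega_\F \circ [T,\incl]_T^S = \omega_\F \circ [T,\incl]_T^S \circ \omega_\K$
from weak normality of $\K$; this requires translating the fusion-system notion of weak normality into a precise statement about how $\omega_\K$ interacts with restriction through $[T,\incl]_T^S$ and subsequent composition with $\omega_\F$, and is the piece that most visibly uses the normalcy hypothesis. Once this is in hand, the rest is a structured but essentially combinatorial comparison of supports and multiplicities in the transitive basis, controlled by the uniqueness of characteristic idempotents established in \cite{KRStancu:SFSasIdem}.
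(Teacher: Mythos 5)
Your proof of the ``if'' direction (from the idempotent identity to ${}_S\F_T=\H\K$) is essentially the paper's argument: a nonvanishing contribution of $(P,\varphi)$ on the left forces, via the double coset formula, a basis element of $\omega_\H\circ[T,\incl]_T^S\circ\omega_\K$ of the form $[\,\cdot\,,\eta\circ c_x\circ\kappa]$ to appear, which gives the factorization. (The paper detects this with the fixed-point functionals $\Phi_{\langle P,\varphi\rangle}$ rather than basis coefficients, which also disposes of the conjugacy worry you raise, since $c_x$ for $x\in S$ is a morphism of both $\H$ and $\K$.) That half is fine and, as the paper notes, needs no normality hypothesis.

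The ``only if'' direction has a genuine gap, and you have misplaced the role of weak normality. Your auxiliary identity $\omega_\F\circ[T,\incl]_T^S=\omega_\F\circ[T,\incl]_T^S\circ\omega_\K$ is not the heart of the matter and uses no normality at all: $[T,\incl]_T^S\circ[P,\varphi]_P^T=[P,\incl\circ\varphi]_P^S$, so $\omega_\F\circ[T,\incl]_T^S$ is right $\K$-stable simply because $\omega_\F$ is right $\F$-stable and every $\K$-morphism is an $\F$-morphism, and right $\K$-stable elements are exactly those fixed under $-\circ\omega_\K$ (this is the first line of the proof of Proposition~\ref{prop-right-side}). After this trivial step your remaining goal, $\omega_\F\circ[T,\incl]_T^S\circ\omega_\K=\omega_\H\circ[T,\incl]_T^S\circ\omega_\K$, is just a restatement of the theorem, and the argument you offer for it --- that ``uniqueness of the characteristic idempotent \dots pins down the coefficients'' --- does not apply: the uniqueness theorem concerns idempotents in $A(S,S)_{(p)}$ that are $\F$-generated, bi-stable and of augmentation prime to $p$, whereas the elements here live in $A(T,S)_{(p)}$, are not idempotents, and the set of left-$\H$-stable, right-$\K$-stable elements of $A(T,S)_{(p)}$ is a large submodule, so no uniqueness principle identifies two of its members. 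The paper instead (i) uses the factorization hypothesis at the level of fixed points to show that the identity is equivalent to right $\K$-stability of $\omega_\K\circ[T,\id]_S^T\circ\omega_\H\circ[T,\incl]_T^S$ (Proposition~\ref{prop-right-side}), and then (ii) verifies that stability using weak normality in three essential places: the Frattini reduction to $\H=N_\H(T)$ (Lemma~\ref{lem:normal case}), the support result that the characteristic idempotent of a $T$-normalizing system is concentrated on $[Q,\psi]$ with $Q\geq T$ (Lemma~\ref{lem:idempotent-for-normalizer}), and the commutation of $\omega_\K$ with $[T,c_x\circ\psi\circ c_y]_T^T$ from Theorem 8.2 of \cite{KRStancu:SFSasIdem}. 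None of this is present in, or replaceable by, the support-and-uniqueness comparison you outline.
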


In fact we prove the ``if'' part of Theorem \ref{thm:main} without the normalcy condition on $\K$.

The proof of Theorem \ref{thm:main} depends on a structural result for characteristic idempotents of saturated fusion systems with normal subgroups that is interesting in its own right (Lemma \ref{lem:normal case}). The proof of Conjecture \ref{running-conjecture} will likely depend on more general structural results for characteristic idempotents and we postpone tackling the general case until those tools are available.

Specializing to the case where $\K$ is $O^p(\F)$ or $O^{p'}(\F)$ (see \cite{BCGLO2} for definitions), we get the following corollary. Note that part (1) was used in \cite{DGPS:YoshidaAndTate} as Theorem 4.1.

\begin{cor}
Let $\F$ be a saturated fusion system on a finite $p$-group $S$.  Then
\begin{enumerate}
\item $\omega_\F \circ [T,\incl]^S_T = [T,\incl]^S_T \circ \omega_{O^p(\F)}$, where $T$ is the hyperfocal subgroup of $\F$.
\item $\omega_\F = \omega_{N_\F(S)} \circ \omega_{O^{p'}(\F)}$.
\end{enumerate}
\end{cor}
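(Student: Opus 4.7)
The plan is to derive both parts as immediate applications of Theorem~\ref{thm:main}, specializing $\K$ to $O^p(\F)$ for part (1) and to $O^{p'}(\F)$ for part (2). Only the ``if'' direction of Theorem~\ref{thm:main} is needed, so the weak-normality clause plays only a verification role (it holds automatically for both $O^p(\F)$ and $O^{p'}(\F)$ by \cite{BCGLO2}).

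For part (2), I would apply Theorem~\ref{thm:main} with $\K = O^{p'}(\F)$ and $\H = N_\F(S)$, both saturated subsystems of $\F$ on $S$. Since $T = S$ here, the biset $[T,\incl]_T^S$ is the identity of the double Burnside ring $A(S,S)\pLoc$, so the conclusion of the theorem simplifies directly to $\omega_\F = \omega_{N_\F(S)} \circ \omega_{O^{p'}(\F)}$. The remaining task is to verify the composition-product condition ${}_S\F_S = N_\F(S) \cdot O^{p'}(\F)$, namely that every $\F$-morphism factors as a morphism in $O^{p'}(\F)$ followed by one in $N_\F(S)$. This is a standard consequence of the characterization of $O^{p'}(\F)$ in \cite{BCGLO2}: the quotient $\F/O^{p'}(\F)$ is controlled by $\Aut_\F(S)/O^{p'}(\Aut_\F(S))$, so every morphism in $\F$ is a composite of an $O^{p'}(\F)$-morphism with the restriction of some element of $\Aut_\F(S)$.

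For part (1), the natural choice is $\K = O^p(\F)$ on the hyperfocal subgroup $T$, together with $\H = \F_S(S)$, the minimal saturated fusion system on $S$. Since $\omega_{\F_S(S)}$ equals the identity biset $[S,\id]_S^S$ in $A(S,S)\pLoc$, the right-hand side of Theorem~\ref{thm:main} collapses precisely to $[T,\incl]_T^S \circ \omega_{O^p(\F)}$, which is the claim. What remains is to verify the composition-product condition ${}_S\F_T = \F_S(S) \cdot O^p(\F)$: every $\F$-morphism $\varphi : P \to Q$ with $P \leq T$ should factor as $c_s \circ \kappa$ with $s \in S$ and $\kappa \in O^p(\F)$. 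This is the fusion-theoretic analogue of the group identity $G = S \cdot O^p(G)$, and follows from the hyperfocal subgroup theorem combined with strong closure of $T$ in $\F$ (so that $\varphi(P) \leq T$ automatically).

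The main obstacle lies in pinning down this second composition-product decomposition. If the strongest form of the hyperfocal factorization turns out to require a general $\alpha \in \Aut_\F(S)$ rather than merely conjugation by an element of $S$, then $\H$ would need to be enlarged to $N_\F(S)$; one would then have to show separately that $\omega_{N_\F(S)} \circ [T,\incl]_T^S \circ \omega_{O^p(\F)} = [T,\incl]_T^S \circ \omega_{O^p(\F)}$, exploiting the strong closure of $T$ and the absorption of any residual $p$-power automorphism of $T$ into $\omega_{O^p(\F)}$. In either case, the role of Theorem~\ref{thm:main} is purely to repackage the hyperfocal/focal factorization as the idempotent identity asserted in the corollary.
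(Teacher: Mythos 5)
Your proposal is correct and takes essentially the same route as the paper, which states the corollary as a direct specialization of Theorem \ref{thm:main} with no further argument; your choices $\H = \F_S(S)$ (whose characteristic idempotent is $[S,\id]_S^S$, the identity of $A(S,S)\pLoc$) for part (1) and $\H = N_\F(S)$ for part (2) are exactly what make the identity collapse to the stated form. The obstacle you flag in part (1) dissolves via Lemma \ref{lem:normal case}: since $O^p(\F)$ is weakly normal, the composition-product condition reduces to $\Aut_\F(T)=\Aut_S(T)\Aut_{O^p(\F)}(T)$, which holds because $T$ is strongly $\F$-closed (so $\Aut_S(T)$ is a Sylow $p$-subgroup of $\Aut_\F(T)$ and hence surjects onto the $p$-group $\Aut_\F(T)/O^p(\Aut_\F(T))$) while $\Aut_{O^p(\F)}(T)\supseteq O^p(\Aut_\F(T))$, so no enlargement of $\H$ to $N_\F(S)$ is needed.
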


Using the conventions for Mackey functors on fusion systems laid out in Section \ref{sec:Mackey}, we have the following consequence of Theorem \ref{thm:main}.

\begin{cor} \label{cor:Mackey}
Let $\F, \H$ and $\K$ be as in Theorem \ref{thm:main}. If $M$ is a $p$-local, $p$-defined Mackey functor, then the maps 
\[ M(\H) \xrightarrow{tr_{\H}^{\F}} M(\F) \xrightarrow{res_{\K}^{\F}} M(\K) \]
and
\[ M(\H) \xrightarrow{res_T^\H} M(T) \xrightarrow{tr_T^\K} M(\K) \]
agree.
\end{cor}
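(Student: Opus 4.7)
The plan is to translate each of the two composites in the corollary into an expression in the characteristic idempotents $\omega_\F, \omega_\H, \omega_\K$ and the restriction biset $[T,\incl]_T^S$, and then to read off the equality directly from Theorem \ref{thm:main}.

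The first step is to unpack the Mackey-functor conventions of Section \ref{sec:Mackey}. Under these, $M(\F)$ is identified with $\omega_\F M(S) \subseteq M(S)$, and similarly $M(\H) = \omega_\H M(S) \subseteq M(S)$ and $M(\K) = \omega_\K M(T) \subseteq M(T)$. The transfer $tr_\H^\F$ acts on $x \in M(\H)$ as $\omega_\F$ (both fusion systems live on $S$); the restriction $res_\K^\F$ sends $y \in M(\F)$ to $\omega_\K([T,\incl]_T^S(y))$, i.e., first restrict to $T$ and then project to $\K$; the restriction $res_T^\H$ on $x \in M(\H)$ is $[T,\incl]_T^S(x)$; and the transfer $tr_T^\K$ on $z \in M(T)$ is $\omega_\K(z)$.

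With this dictionary in hand, the two composites in the corollary evaluated at $x \in M(\H)$ become
\[
 res_\K^\F \circ tr_\H^\F(x) = \omega_\K \circ [T,\incl]_T^S \circ \omega_\F(x), \qquad tr_T^\K \circ res_T^\H(x) = \omega_\K \circ [T,\incl]_T^S(x).
\]
The next step is to invoke Theorem \ref{thm:main}. Since $\omega_\H(x) = x$ for $x \in M(\H)$, the identity of the theorem applied to $x$ collapses its right-hand side and reads $[T,\incl]_T^S \circ \omega_\F(x) = \omega_\K \circ [T,\incl]_T^S(x)$. Post-composing with $\omega_\K$ and invoking the idempotency $\omega_\K^2 = \omega_\K$, the first composite reduces to $\omega_\K \circ [T,\incl]_T^S(x)$, which is precisely the second.

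The only real obstacle is notational: matching the transfer and restriction maps on $M$ with their biset-theoretic descriptions via characteristic idempotents, and in particular verifying that the operator ordering in Theorem \ref{thm:main} is compatible with the action on a contravariant Mackey functor. Once that translation is made explicit by the setup in Section \ref{sec:Mackey}, the corollary is an immediate consequence of Theorem \ref{thm:main} together with the idempotency of $\omega_\K$; no further structural input is required.
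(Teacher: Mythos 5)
Your proposal is correct and follows essentially the same route as the paper: both apply the contravariant functor $M$ to the identity $\omega_\F \circ [T,\incl]_T^S = \omega_\H \circ [T,\incl]_T^S \circ \omega_\K$ of Theorem \ref{thm:main}, then absorb $M(\omega_\H)$ on the input side (since elements of $M(\H)$ are fixed by it) and $M(\omega_\K)$ on the output side (via idempotency, i.e.\ $tr_T^\K \circ M(\omega_\K) = tr_T^\K$). Your element-level dictionary matches the paper's definitions of $tr_\H^\F$ and $res_\K^\F$ exactly, so no gap remains.
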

The maps in Corollary \ref{cor:Mackey} correspond to the maps in \eqref{map:HGK} and \eqref{map:HLK}. The analogue to the latter map necessarily takes a different form in the fusion setting as we let it factor through $M(T)$ rather than the value of $M$ at the intersection of $\H$ and $\K$, since this intersection is not necessarily a saturated fusion system. However, the map actually factors through the submodule of elements in $M(T)$ that are both $\H$-stable and $\K$-stable, which is analogous to factoring through the intersection.

This project was initiated when the first two authors met at the Ohio State University in the summer of 2009. We are grateful to OSU for their hospitality and to the Midwest Topology Network for the travel funding that made this visit possible.

\section{Notation and terminology} \label{sec:Notation}

\subsection{Bisets and the double Burnside ring} \label{subsec:Burnside}
For finite groups $G$ and $H$ a \emph{$(G,H)$-biset} is a set with left $H$-action and right $G$-action such that the two actions commute. We say that a $(G,H)$-biset is \emph{left-free} if the left action is free, \emph{right-free} if the right action is free, and \emph{bifree} if both actions are free. Isomorphism classes of finite, left-free $(G,H)$-bisets form a monoid with cancellation under disjoint union, and the \emph{double Burnside module} $A(G,H)$ is the group completion of this monoid. 

The $\Z$-module structure of $A(G,H)$ is easy to describe. For a subgroup $K \leq G$ and a homomorphism $\varphi \colon K \to H$ one obtains an indecomposable $(G,H)$-biset 
\[  
  H \times_{K,\varphi} G = H \times G / \sim \, ,
\]
where $\sim$ is the equivalence relation $(h,kg)\sim (h\varphi(k),g)$ for $k \in K$. The isomorphism class of $H \times_{K,\varphi} G$ is determined by the conjugacy class of $K$ and $\varphi$, with conjugacy taken in both $G$ and $H$, and we denote it by $[K,\varphi]^H_G$, or just $[K,\varphi]$ when there is no danger of confusion. It is not hard to check that every indecomposable $(G,H)$-biset belongs to an isomorphism class $[K,\varphi]$, and it follows that $A(G,H)$ is a free $\Z$-module with basis the collection of elements $[K,\varphi]$ running over all conjugacy classes of pairs $(K,\varphi)$. We refer to this basis as the \emph{standard basis} of $A(G,H)$. For an element $X \in A(G,H)$ we let $\chi_{_{[K,\varphi]}}(X)$ denote the coefficient of $[K,\varphi]$ in the standard basis decomposition, so we have
\[ X = \sum_{[K,\varphi]} \chi_{_{[K,\varphi]}}(X) \, [K,\varphi] \, . \]
For a $\Z$-module $M$ we write $M_{(p)}$ for the $p$-localization of $M$. The standard basis of $A(G,H)$ also forms a basis for $A(G,H)\pLoc$ and we denote the $p$-localization of the morphisms $\chi_{_{[K,\varphi]}}$ also by $\chi_{_{[K,\varphi]}}$. 

Another convenient way to characterize elements in $A(G,H)$ is by their fixed points. For a $(G,H)$-biset $X$ and a pair $(K,\varphi)$ with $K \leq G$ and $\varphi \colon  K \to H$, set
\[ \Phi_{\langle K,\varphi \rangle } (X) = | \{ x \in X \mid \forall k \in K \colon xk = \varphi(k)x \} | \, .  \]
Extending linearly, we get a homomorphism 
\[ \Phi_{\langle K,\varphi \rangle } \colon A(G,H) \to \Z \, . \]
A classical result, going back to Burnside, says that the product of maps $\Phi_{\langle K,\varphi \rangle }$, running over all conjugacy classes of pairs $\langle K,\varphi \rangle$, is an injection. In other words, $X = Y$ in $A(G,K)$ if and only if $\Phi_{\langle K,\varphi \rangle } (X) = \Phi_{\langle K,\varphi \rangle } (Y) $ for all $\langle K,\varphi \rangle $.

Burnside modules admit a composition pairing
 \[ A(H,K) \times A(G,H) \longrightarrow A(G,K) \, , \quad (X,Y) \longmapsto X\circ Y \, , \]
defined by setting $(X \circ Y) = X \times_H Y$ for bisets $X$ and $Y$ and then extending linearly to general elements. By definition the composition pairing is bilinear and in particular $A(G,G)$ is a ring, called the \emph{double Burnside ring of $G$}. This ring has unit $[G,\id]$, which is the isomorphism class of $G$ regarded as a $(G,G)$-biset by translation. We endow the double Burnside ring with an \emph{augmentation} $\epsilon \colon A(S,S) \to \Z$, defined on basis elements by $\epsilon([P,\varphi]_S^S) = |S/P|$.

\subsection{Characteristic idempotents}
Characteristic idempotents play the role for fusion systems that the $(S,S)$-biset $G$ plays for a finite group $G$ with Sylow $p$-subgroup $S$. As described in Section \ref{sec:Mackey}, this provides a way to define Mackey functors and transfers on fusion systems. Saturated fusion systems on a $p$-group $S$ can in fact be represented in the $p$-localized double Burnside ring $A(S,S)\pLoc$ by their characteristic idempotents. A detailed account of this correspondence is given in \cite{KRStancu:SFSasIdem}, and here we only recall the definitions as needed. 

\begin{defn}
Let $\F$ be a fusion system on a finite $p$-group $S$, and let $X \in A(S,S)\pLoc$. We say that $X$ is \emph{$\F$-generated} if $X$ is a linear combination of standard basis elements $[P,\varphi]$ with $\varphi$ in $\F$.
\end{defn}

\begin{defn}
Let $\F$ be a fusion system on a finite $p$-group $S$, and let $H$ be any finite group. 
\begin{itemize}
  \item We say that $X \in A(S,H)\pLoc$ is \emph{right $\F$-stable} if, for every $P \leq S$ and every $\varphi \in \Hom_\F(P,S)$, the following equation holds in $A(P,H)\pLoc$:
\[ X \circ [P,\varphi]_P^S = X \circ [P,\incl]_P^S \, . \]
 \item We say that $X \in A(H,S)\pLoc$ is \emph{left $\F$-stable} if, for every $P \leq S$ and every $\varphi \in \Hom_\F(P,S)$, the following equation holds in $A(H,P)\pLoc$:
\[ [\varphi(P),\varphi^{-1}]_S^P \circ X  = [P,\id]_S^P \circ X \, .\]
\end{itemize}
\end{defn}

\begin{defn}
Let $\F$ be a saturated fusion system on a finite $p$-group $S$. An element $\Omega \in A(S,S)\pLoc$ is a \emph{characteristic element} for $\F$ if it is $\F$-generated, left and right $\F$-stable, and has augmentation prime to $p$. 
A characteristic element for $\F$ that is idempotent is called a \emph{characteristic idempotent} for $\F$.
\end{defn}

It is easy to check that for a finite group $G$ with Sylow $p$-subgroup $S$, the $(S,S)$-biset $G$ is a characteristic element for $\F_S(G)$. It was shown in \cite{BLO2} that every saturated fusion system has a characteristic element, and in \cite{KR:ClSpectra} that every saturated fusion system has a unique characteristic idempotent. We denote the characteristic idempotent of a saturated fusion system $\F$ by $\omega_\F$.

The following result from \cite{KR:ClSpectra} is crucial to our arguments.
\begin{prop}
Let $\F$ be a fusion system on a finite $p$-group $S$, and let $H$ be any finite group. 
\begin{itemize}
  \item[(a)] $X \in A(S,H)\pLoc$ is right $\F$-stable if and only if $X \circ \omega_\F = X$.
 \item[(b)] $X \in A(H,S)\pLoc$ is left $\F$-stable if and only if $\omega_\F \circ X = X$.
\end{itemize}
\end{prop}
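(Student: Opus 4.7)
The plan is to prove part (a); part (b) follows by the anti-involution on double Burnside modules induced by interchanging left and right actions on bisets, which preserves $\omega_\F$ and swaps the two stability conditions. For the ``if'' direction, assume $X \circ \omega_\F = X$. Then for any $\varphi \in \Hom_\F(P,S)$,
\[
X \circ [P,\varphi]_P^S = (X \circ \omega_\F) \circ [P,\varphi]_P^S = X \circ (\omega_\F \circ [P,\varphi]_P^S) = X \circ (\omega_\F \circ [P,\incl]_P^S) = X \circ [P,\incl]_P^S,
\]
where the third equality uses the right $\F$-stability of $\omega_\F$ itself (which holds since $\omega_\F$ is characteristic). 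Hence $X$ is right $\F$-stable.

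For the ``only if'' direction, assume $X$ is right $\F$-stable and set $Y := X - X \circ \omega_\F$. Applying the ``if'' direction to $X \circ \omega_\F$, which satisfies $(X \circ \omega_\F) \circ \omega_\F = X \circ \omega_\F^2 = X \circ \omega_\F$ by idempotence, shows that $X \circ \omega_\F$ is right $\F$-stable, and hence so is $Y$. Moreover $Y \circ \omega_\F = X \circ \omega_\F - X \circ \omega_\F^2 = 0$. The proof therefore reduces to the following claim: \emph{if $Y \in A(S,H)\pLoc$ is right $\F$-stable and $Y \circ \omega_\F = 0$, then $Y = 0$.}

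To establish this reduced claim, I would use that the product of fixed-point maps $\prod_{\la Q,\alpha \ra} \Phi_{\la Q,\alpha \ra}$ is injective on $A(S,H)\pLoc$, so it suffices to show $\Phi_{\la Q,\alpha \ra}(Y) = 0$ for every conjugacy class of pairs $(Q,\alpha)$ with $Q \leq S$ and $\alpha \colon Q \to H$. Expanding $\omega_\F$ in the standard basis isolates a diagonal term $\chi_{[S,\id]}(\omega_\F) \cdot [S,\id]_S^S$, whose coefficient is $\equiv 1 \pmod p$ (since $\epsilon(\omega_\F)=1$) and is therefore a unit in $\Z\pLoc$. Applying the Mackey-type formula for fixed points of a composition to the equation $\Phi_{\la Q,\alpha \ra}(Y \circ \omega_\F) = 0$ yields a linear relation in which $\chi_{[S,\id]}(\omega_\F) \cdot \Phi_{\la Q,\alpha \ra}(Y)$ is expressed in terms of values $\Phi_{\la R,\beta \ra}(Y)$ arising from the lower-order terms $[P,\psi]_S^S$ with $P < S$. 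A suitable induction, using the right $\F$-stability of $Y$ to identify values of $\Phi$ on $\F$-conjugate pairs, then forces all $\Phi_{\la Q,\alpha \ra}(Y)$ to vanish.

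The main obstacle is managing the combinatorics of the Mackey-type fixed-point formula for $Y \circ [P,\psi]_S^S$ with $P < S$, and identifying an ordering on conjugacy classes of pairs $(Q,\alpha)$ that makes the induction go through. This is precisely the kind of analysis that powers the construction and uniqueness of $\omega_\F$ in \cite{KR:ClSpectra}, where the guiding principle is that $\F$-stability together with a unit coefficient on $[S,\id]_S^S$ pins down elements of $A(S,H)\pLoc$.
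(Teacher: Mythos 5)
First, a point of reference: the paper does not prove this proposition at all --- it is imported verbatim from \cite{KR:ClSpectra} --- so there is no in-paper argument to compare against, and I am assessing your proposal on its own terms. Your ``if'' direction is complete and correct, and your reduction of the ``only if'' direction to the claim that a right $\F$-stable $Y$ with $Y \circ \omega_\F = 0$ must vanish is also correct. The gap is that the proof of that claim --- which is the entire content of the hard direction --- is not actually supplied: you describe a strategy and then explicitly defer ``the main obstacle,'' namely the combinatorics of the fixed-point formula and the choice of ordering for the induction. Moreover, one specific step in the sketch is wrong as stated: since $\epsilon([P,\varphi]) = |S/P| \equiv 0 \pmod p$ for $P < S$, the condition $\epsilon(\omega_\F)=1$ only controls the \emph{sum} $\sum_{\varphi} \chi_{_{[S,\varphi]}}(\omega_\F) \equiv 1 \pmod p$ over all classes $[S,\varphi]$ with $\varphi \in \Aut_\F(S)$, not the single coefficient $\chi_{_{[S,\id]}}(\omega_\F)$. (Each such coefficient is in fact $1/|\Out_\F(S)|$, a unit in $\Z\pLoc$, but seeing this requires the left and right stability of $\omega_\F$ and the Sylow axiom, not the augmentation alone.)

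The good news is that the reduced claim needs neither the isolation of $[S,\id]$ nor any induction, because once right $\F$-stability is invoked there are no lower-order terms at all. For $Y$ right $\F$-stable and $[P,\psi]$ with $\psi$ in $\F$, one has $Y \circ [P,\psi]^S_S = Y \circ [P,\incl]^S_S = Y \times_P S$, and a direct count of $\la Q,\alpha\ra$-fixed points of this induced biset shows that each coset $Ps$ with ${}^sQ \leq P$ contributes exactly $\Phi_{\la {}^sQ, \alpha\circ c_{s^{-1}}\ra}(Y) = \Phi_{\la Q,\alpha\ra}(Y)$ (using Lemma \ref{lem:RephraseStable}(a) for the inner morphism $c_s$), while cosets with ${}^sQ \not\leq P$ contribute nothing. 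Hence
\[
\Phi_{\la Q,\alpha\ra}(Y\circ\omega_\F) \;=\; \Bigl(\sum_{[P,\psi]} \chi_{_{[P,\psi]}}(\omega_\F)\,\bigl|(S/P)^Q\bigr|\Bigr)\,\Phi_{\la Q,\alpha\ra}(Y),
\]
where $(S/P)^Q$ denotes the $Q$-fixed points of $S/P$. Since $|(S/P)^Q| \equiv |S/P| \pmod p$, the scalar is congruent to $\epsilon(\omega_\F)=1$ mod $p$ and is therefore a unit in $\Z\pLoc$; the vanishing of $Y\circ\omega_\F$ then forces all $\Phi_{\la Q,\alpha\ra}(Y)$ to vanish, and injectivity of the fixed-point maps gives $Y=0$. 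This is essentially the argument of \cite{KR:ClSpectra}. One last caveat: deducing (b) from (a) by the opposite-biset involution is not automatic with the paper's conventions, since the opposite of a left-free $(H,S)$-biset need not be left-free as an $(S,H)$-biset when $H$ is not a $p$-group, so part (b) deserves either a separate (symmetric) argument or a remark justifying the passage to opposites.
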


We will also need the following characterization of $\F$-stability in terms of fixed points from \cite{KRStancu:SFSasIdem}.
\begin{lem} \label{lem:RephraseStable}
Let $\F$ be a fusion system on a finite $p$-group $S$, and let $H$ be any finite group. 
\begin{itemize}
 \item[(a)] $X \in A(S,H)\pLoc$ is right $\F$-stable if and only if for every pair $(Q,\psi)$ with $Q \leq S$ and $\psi \colon Q \to H$, and every $\varphi \in \Hom_{\F}(Q,S)$, 
  \[ \Phi_{\langle Q,\psi \rangle} (X) = \Phi_{\langle \varphi(Q),\psi \circ \varphi^{-1} \rangle} (X)\,. \]
 \item[(b)] $X \in A(H,S)\pLoc$ is left $\F$-stable if and only if for every pair $(Q,\psi)$ with $Q \leq H$ and $\psi \colon Q \to S$, $\varphi \in \Hom_{\F}(\psi(Q),S)$,
  \[ \Phi_{\langle Q,\psi \rangle} (X) = \Phi_{\langle Q, \varphi \circ \psi \rangle} (X)\,. \]
\end{itemize}
\end{lem}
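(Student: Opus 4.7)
The plan is to translate the stability equation, which is an equality in some $A(-,-)\pLoc$, into a system of equalities of Burnside fixed-point counts $\Phi_{\langle -,-\rangle}$, and then invoke the Burnside injectivity theorem recalled in Subsection~\ref{subsec:Burnside} ($X = Y$ iff $\Phi_{\langle K,\varphi\rangle}(X) = \Phi_{\langle K,\varphi\rangle}(Y)$ for all $\langle K,\varphi\rangle$). The one non-formal ingredient needed is a direct evaluation of $\Phi_{\langle Q,\psi\rangle}(X \circ [P,\varphi]_P^S)$ in terms of the fixed-point function of $X$ itself.

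For part (a), I would first unwind the tensor product. Writing $[P,\varphi]_P^S = S \times_{P,\varphi} P$ and absorbing the $S$-coordinate into $X$ via the $\times_S$ identification, one sees that
\[
 X \circ [P,\varphi]_P^S \;=\; X \times_S \bigl(S \times_{P,\varphi} P\bigr)
\]
is naturally isomorphic, as a $(P,H)$-biset, to $X$ equipped with its original left $H$-action and with right $P$-action $x \cdot q := x\varphi(q)$ pulled back from the right $S$-action via $\varphi$. An element $x \in X$ is then $(Q,\psi)$-fixed in this new biset exactly when $x\varphi(q) = \psi(q)x$ for every $q \in Q$, and since $\varphi$ is injective this is precisely the condition for $x$ to be $(\varphi(Q),\psi\circ\varphi^{-1})$-fixed in $X$. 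This produces the key identity
\[
 \Phi_{\langle Q,\psi\rangle}\bigl(X \circ [P,\varphi]_P^S\bigr) \;=\; \Phi_{\langle \varphi(Q),\,\psi\circ\varphi^{-1}\rangle}(X),
\]
which specializes with $\varphi = \incl$ to $\Phi_{\langle Q,\psi\rangle}(X \circ [P,\incl]_P^S) = \Phi_{\langle Q,\psi\rangle}(X)$.

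Combining these two formulas, right $\F$-stability becomes, via Burnside injectivity, the family of equalities $\Phi_{\langle \varphi(Q),\psi\circ\varphi^{-1}\rangle}(X) = \Phi_{\langle Q,\psi\rangle}(X)$ ranging over all $P \leq S$, $\varphi \in \Hom_\F(P,S)$, $Q \leq P$ and $\psi\colon Q \to H$. The outer parameter $P$ is redundant: any restriction $\varphi|_Q$ again lies in $\Hom_\F(Q,S)$, and conversely any triple $(Q,\psi,\widetilde\varphi)$ appearing in the lemma is realized by taking $P = Q$ and $\varphi = \widetilde\varphi$. This gives (a). Part (b) follows by the mirror computation: $[\varphi(P),\varphi^{-1}]_S^P \circ X$ is identified with $X$ carrying its left $S$-action restricted to $\varphi(P)$ and re-interpreted as a left $P$-action via $\varphi^{-1}$, and the same fixed-point bookkeeping yields the stated criterion. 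The one point that requires care is precisely this biset identification in each part, and in particular tracking whether the twist ends up on $\psi$ or on $\varphi$; once that is set down correctly, the remainder of the argument is a purely formal comparison of fixed-point functions.
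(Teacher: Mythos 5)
Your argument is correct. Note that the paper does not actually prove Lemma \ref{lem:RephraseStable}; it defers to Lemma 4.8 of \cite{KRStancu:SFSasIdem} (stated there for $H=S$) and remarks that the argument extends. Your proof is the standard one that the citation points to: the biset identification $X \circ [P,\varphi]_P^S \cong X$ with right $P$-action twisted through $\varphi$ is exactly right, it gives $\Phi_{\langle Q,\psi\rangle}(X \circ [P,\varphi]_P^S) = \Phi_{\langle \varphi(Q),\psi\circ\varphi^{-1}\rangle}(X)$ by linearity, and the reduction eliminating the ambient $P$ is handled correctly, as is the mirror computation for (b). The only point worth making explicit is that the injectivity of the mark homomorphism, recalled in the paper only for $A(G,H)$ over $\Z$, persists for $A(-,-)\pLoc$ because localization is exact; with that remark added, your proof is complete and self-contained.
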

Lemma \ref{lem:RephraseStable} is proven as Lemma 4.8 of \cite{KRStancu:SFSasIdem} in the case where $H = S$, but the argument holds for general $H$.

\section{Characteristic idempotents of composition products} \label{sec:Conj}
We now restrict attention to the case where $R=S$. Under this assumption, Conjecture~\ref{running-conjecture} becomes

\begin{conj} \label{conj-right-side}
Let $\F$ be a saturated fusion system on a finite $p$-group $S$.  Let $\H$ be a saturated subsystem of $\F$ on $S$, and let $\K$ be a saturated subsystem of $\F$ on some subgroup $T$ of $S$.  Then we have ${}_{S}\F_T = \H\K$ if and only if
\begin{equation} \label{E:idemp-relation}
	\omega_\F \circ [T,\incl]_T^S = \omega_\H \circ [T,\incl]_T^S \circ \omega_\K \, . \tag{$\ast$}
\end{equation}
\end{conj}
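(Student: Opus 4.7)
The plan is to prove both implications via fixed-point analysis in $A(T,S)\pLoc$. Write $\iota = [T,\incl]_T^S$; both sides of \eqref{E:idemp-relation} lie in $A(T,S)\pLoc$, and by the injectivity of the joint fixed-point map $\prod_{(Q,\psi)}\Phi_{\la Q,\psi\ra}$ it suffices to compare the values $\Phi_{\la Q,\psi\ra}$ at every pair $(Q,\psi)$ with $Q\le T$ and $\psi\colon Q\to S$.

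For the only-if direction, assume ${}_S\F_T=\H\K$. I would first observe that $\omega_\F\circ\iota$ is automatically right $\K$-stable: for $\varphi\in\Hom_\K(P,T)$ the composite $\incl\circ\varphi\colon P\to S$ is an $\F$-morphism, so right $\F$-stability of $\omega_\F$ applied to this morphism gives
\[
(\omega_\F\circ\iota)\circ[P,\varphi]_P^T = \omega_\F\circ[P,\incl\circ\varphi]_P^S = \omega_\F\circ[P,\incl]_P^S = (\omega_\F\circ\iota)\circ[P,\incl]_P^T.
\]
Hence $\omega_\F\circ\iota=\omega_\F\circ\iota\circ\omega_\K$, and the required identity reduces to $\omega_\F\circ Z=\omega_\H\circ Z$ where $Z=\iota\circ\omega_\K=\sum d_{[P_\kappa,\kappa]}\,[P_\kappa,\kappa]_T^S$ runs over $\K$-morphisms. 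Expanding each product $[R,\rho]_S^S\circ[P_\kappa,\kappa]_T^S$ by the Mackey double-coset formula as $\sum_{s\in R\backslash S/\kappa(P_\kappa)}[L_s,\rho\circ c_s\circ\kappa]_T^S$, where $L_s=\kappa^{-1}(s^{-1}Rs\cap\kappa(P_\kappa))$ and $c_s$ denotes conjugation by $s$, both $\omega_\F\circ Z$ and $\omega_\H\circ Z$ become $\Z\pLoc$-linear combinations of basis elements $[L,\tau]_T^S$, and the assumption $\F=\H\K$ together with the fact that inner $S$-automorphisms lie in $\H$ identifies the sets of $\tau$'s that appear on the two sides.

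For the if direction, assume \eqref{E:idemp-relation} and let $\varphi\colon P\to S$ be any $\F$-morphism with $P\le T$. The element $\omega_\F\circ\iota$ is precisely $\omega_\F$ with its right $S$-action restricted along $T\hookrightarrow S$, so (since $P\le T\le S$) we have $\Phi_{\la P,\varphi\ra}(\omega_\F\circ\iota)=\Phi_{\la P,\varphi\ra}(\omega_\F)$, which is strictly positive for $\varphi\in\F$ by the standard positivity of characteristic-idempotent fixed points on fusion morphisms. By the same Mackey decomposition, $\omega_\H\circ\iota\circ\omega_\K$ is supported on basis elements $[L,\tau]_T^S$ with $\tau=\eta\circ c_s\circ\kappa$ for some $\eta\in\H$, $\kappa\in\K$, $s\in S$, and each such $\tau$ lies in $\H\K$. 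A nonzero value of $\Phi_{\la P,\varphi\ra}([L,\tau]_T^S)$ forces $\varphi=c_{s'}\circ\tau\circ c_t$ for some $s'\in S$, $t\in T$, and since $c_{s'}\in\H$ and $c_t\in\K$ this yields a factorization $\varphi=\eta'\circ\kappa'$ with $\eta'\in\H$ and $\kappa'\in\K$. Equality \eqref{E:idemp-relation} thus forces every $\F$-morphism from $P\le T$ to $S$ to lie in $\H\K$, which is exactly the composition-product condition.

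The central obstacle lies in the only-if direction: matching multiplicities. A single $\F$-morphism $\varphi$ may admit many factorizations $\eta\circ c_s\circ\kappa$, and the coefficient with which $[P,\varphi]_T^S$ appears in $\omega_\H\circ\iota\circ\omega_\K$ --- produced by the standard-basis expansions of $\omega_\H$ and $\omega_\K$ together with the double-coset indexing --- must exactly equal its coefficient in $\omega_\F\circ\iota$. This numerical coincidence appears to require a structural decomposition of $\omega_\F$ compatible with the factorization $\F=\H\K$, analogous to but more general than Lemma \ref{lem:normal case}. Accordingly, I would expect the full conjecture to follow from an appropriate generalization of that structural result beyond the weakly-normal case handled in Theorem \ref{thm:main}.
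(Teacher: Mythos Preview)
The statement you are attempting is labelled a \emph{conjecture} in the paper; the authors do not claim a proof of it in general. What they do prove is the ``if'' implication unconditionally and the ``only if'' implication under the extra hypothesis that $\K$ is weakly normal in $\F$ (Theorem~\ref{thm:main}). Your ``if'' direction is essentially the paper's argument: pass to $\Phi_{\la P,\varphi\ra}$, use that $\Phi_{\la P,\varphi\ra}(\omega_\F)\neq 0$ for $\varphi\in\F$, and read off a factorization $\varphi=\eta\circ c_x\circ\kappa$ from a nonvanishing term in the double-coset expansion of $\omega_\H\circ[T,\incl]\circ\omega_\K$. That part is fine.

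The ``only if'' direction, however, is genuinely incomplete, and you correctly diagnose why. Your key sentence---that the hypothesis ${}_S\F_T=\H\K$ ``identifies the sets of $\tau$'s that appear on the two sides'' of $\omega_\F\circ Z$ versus $\omega_\H\circ Z$---establishes at best equality of \emph{supports} in the standard basis, not equality of \emph{coefficients}. Two elements of $A(T,S)\pLoc$ with the same support need not be equal, and there is no evident mechanism in your argument forcing the $\Z\pLoc$-multiplicities to match. The paper avoids this numerical matching entirely: under weak normality it first reformulates \eqref{E:idemp-relation} as the right $\K$-stability of $\omega_\K\circ[T,\id]_S^T\circ\omega_\H\circ[T,\incl]_T^S$ (Proposition~\ref{prop-right-side}), then uses Lemmas~\ref{lem:normal case} and~\ref{lem:idempotent-for-normalizer} to reduce to $\H=N_\H(T)$ and write $\omega_\H$ as a combination of $[Q,\psi]$ with $Q\geq T$, and finally invokes the weak-normality property $\omega_\K\circ[T,\alpha]_T^T\circ\omega_\K=\omega_\K\circ\omega_\K\circ[T,\alpha]_T^T$ for $\alpha\in\Aut_\F(T)$ to slide $\omega_\K$ through. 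None of these reductions is available without the normalcy hypothesis, which is exactly why the general statement remains a conjecture; your closing remark that a proof would require ``an appropriate generalization of that structural result'' is in agreement with the paper's own assessment.
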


This special case of Conjecture~\ref{running-conjecture} admits a convenient description.

\begin{prop} \label{prop-right-side}
Let $\F$ be a saturated fusion system on a finite $p$-group $S$.  Let $\H$ be a saturated subsystem of $\F$ on $S$, and let $\K$ be a saturated subsystem of $\F$ on some subgroup $T$ of $S$.  Suppose ${}_{S}\F_T = \H\K$.  Then the condition~\eqref{E:idemp-relation} in Conjecture~\ref{conj-right-side} is equivalent to either one of the following.
\begin{enumerate}
\item  $[T,\id]_S^T \circ \omega_\H \circ [T,\incl]_T^S \circ \omega_\K$ is left $\K$-stable.
\item $\omega_\K \circ [T,\id]_S^T \circ \omega_\H \circ [T,\incl]_T^S$ is right $\K$-stable.
\end{enumerate}
\end{prop}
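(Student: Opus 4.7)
The plan is to prove the four implications $(\ast) \Rightarrow (1)$, $(1) \Rightarrow (\ast)$, $(\ast) \Rightarrow (2)$, $(2) \Rightarrow (\ast)$. Setting $U := [T, \id]_S^T \circ \omega_\H \circ [T, \incl]_T^S$ and $Z := [T, \id]_S^T \circ \omega_\F \circ [T, \incl]_T^S$ in $A(T,T)\pLoc$, my linchpin observation (not requiring the composition product) is that $Z$ is both left- and right-$\K$-stable. To verify left-$\K$-stability, I take $\varphi \in \Hom_\K(P, T)$, view it as an $\F$-morphism $P \to S$ via $T \hookrightarrow S$, and use the biset identity $[\varphi(P), \varphi^{-1}]_T^P \circ [T, \id]_S^T = [\varphi(P), \varphi^{-1}]_S^P$ together with left-$\F$-stability of $\omega_\F$:
\[
[\varphi(P), \varphi^{-1}]_T^P \circ [T, \id]_S^T \circ \omega_\F = [P, \id]_S^P \circ \omega_\F = [P, \id]_T^P \circ [T, \id]_S^T \circ \omega_\F.
\]
Post-composing with $[T, \incl]_T^S$ gives left-$\K$-stability of $Z$; the mirror argument using right-$\F$-stability of $\omega_\F$ gives right-$\K$-stability.

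For the forward direction $(\ast) \Rightarrow (1)$, I compose $[T, \id]_S^T$ on the left of $(\ast)$, obtaining $Z = U \circ \omega_\K$; since $Z$ is left-$\K$-stable, so is $U \circ \omega_\K$. For $(\ast) \Rightarrow (2)$, I invoke the opposite-biset involution on Burnside modules, which exchanges $[T, \incl]_T^S$ and $[T, \id]_S^T$ while fixing each of $\omega_\F$, $\omega_\H$, $\omega_\K$; this transforms $(\ast)$ into the ``opposite'' identity $[T, \id]_S^T \circ \omega_\F = \omega_\K \circ [T, \id]_S^T \circ \omega_\H$ in $A(S, T)\pLoc$. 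The hypothesis ${}_S \F_T = \H \K$ is equivalent via morphism inversion to ${}_T \F_S = \K \H$, so the involution is compatible with the setup. Post-composing with $[T, \incl]_T^S$ then gives $Z = \omega_\K \circ U$, which is right-$\K$-stable.

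For the converse directions $(1) \Rightarrow (\ast)$ and $(2) \Rightarrow (\ast)$, the composition product enters essentially. My strategy is to show that under $(1)$ (respectively $(2)$) together with ${}_S \F_T = \H \K$, the element $V := \omega_\H \circ [T, \incl]_T^S \circ \omega_\K \in A(T, S)\pLoc$ is left-$\F$-stable. For any $\F$-morphism $\varphi \colon P \to S$ with $P \leq T$, I decompose $\varphi = \eta \circ \kappa$ with $\kappa \in \K$ and $\eta \in \H$; the left-$\F$-stability relation $[\varphi(P), \varphi^{-1}]_S^P \circ V = [P, \id]_S^P \circ V$ then splits via the automatic left-$\H$-stability of $V$ (absorbing $\eta$) and condition $(1)$ applied after the $[T, \id]_S^T$ restriction (absorbing $\kappa$). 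Once $V$ is left-$\F$-stable, I use $V = \omega_\F \circ V$, the right-$\F$-stability of $\omega_\F$ applied termwise to $\omega_\H$, and the right-$\K$-stability of $\omega_\F \circ [T, \incl]_T^S$ to identify $V$ with $\omega_\F \circ [T, \incl]_T^S$, giving $(\ast)$.

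The hard part is the converse direction: promoting the automatic left-$\H$- and right-$\K$-stability of $V$ to left-$\F$-stability using $(1)$ and the composition product, particularly handling $\F$-morphisms whose source is not contained in $T$ (where the composition product does not directly apply) by exploiting support properties of the $[T, \incl]_T^S$ factor in $V$.
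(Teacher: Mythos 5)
Your forward implications are correct and complete, and are done a bit more directly than in the paper: you observe that $Z := [T,\id]_S^T \circ \omega_\F \circ [T,\incl]_T^S$ is automatically left and right $\K$-stable, that $(\ast)$ restricts to $Z = U \circ \omega_\K$, and that the opposite involution (which fixes characteristic idempotents by uniqueness and exchanges $[T,\incl]_T^S$ with $[T,\id]_S^T$) yields condition (2); in particular you correctly isolate that this direction does not need ${}_S\F_T = \H\K$. Your reduction of the converse to left $\F$-stability of $V = \omega_\H \circ [T,\incl]_T^S \circ \omega_\K$ is also the paper's first step, and your element-level verification at morphisms $\varphi \colon P \to S$ with $P \leq T$ goes through: writing $\varphi = \eta\circ\kappa$, left $\H$-stability of $V$ reduces $[\varphi(P),\varphi^{-1}]_S^P \circ V$ to $[\kappa(P),\kappa^{-1}]_T^P \circ (U\circ\omega_\K)$, and condition (1) finishes it.

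The gap is precisely the step you label ``the hard part'' and do not carry out. Left $\F$-stability of $V \in A(T,S)\pLoc$ means $[\varphi(P),\varphi^{-1}]_S^P \circ V = [P,\id]_S^P \circ V$ for \emph{all} $P \leq S$ and $\varphi \in \Hom_\F(P,S)$; for $P \not\leq T$ neither the factorization $\varphi = \eta\circ\kappa$ nor condition (1) applies, and ``support properties of the $[T,\incl]_T^S$ factor'' is a correct intuition but not an argument. The clean way to close this — and it is where the paper does essentially all of its work — is to abandon the element-level identities and use the fixed-point characterization of left stability (Lemma \ref{lem:RephraseStable}(b)). Because $V$ lies in $A(T,S)\pLoc$, the relevant pairs $\la Q,\psi\ra$ automatically have $Q \leq T$; because $V$ is $\F$-generated, $\Phi_{\la Q,\psi\ra}(V) = 0$ unless $\psi$ lies in $\F$, and $\psi$ lies in $\F$ if and only if $\varphi\circ\psi$ does. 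Hence left $\F$-stability of $V$ reduces to the single family of identities $\Phi_{\la Q,\chi\ra}(V) = \Phi_{\la Q,\incl\ra}(V)$ for $Q \leq T$ and $\chi \in \Hom_\F(Q,S)$, where the composition product and condition (1) do apply (this is the paper's chain of fixed-point equalities using the right $\K$- and left $\H$-stability of $V$, which your $P \leq T$ computation reproduces at the element level). With that reduction supplied your proof is complete and is in substance the paper's; without it, the converse implication is not proved.
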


\begin{proof}
We have
\[
	\omega_\F \circ [T,\incl]_T^S = \omega_\F \circ [T,\incl]_T^S\circ \omega_\K = \omega_\F \circ \omega_\H \circ [T,\incl]_T^S\circ \omega_\K,
\]
where the first equality follows from that $\omega_\F$ is right $\K$-stable, and the second equality follows from that $\omega_\F$ is right $\H$-stable.  Thus \eqref{E:idemp-relation} is equivalent to that $X:=\omega_\H \circ [T,\incl]_T^S\circ \omega_\K$ is left $\F$-stable, or equivalently
\[
	\Phi_{\la P,\varphi \ra}(X) = \Phi_{\la P,\incl \ra}(X)
\]
for every $P\leq T$ and $\varphi\in\Hom_\F(P,S)$.  So let $P\leq T$ and $\varphi\in\Hom_\F(P,S)$.  By assumption, we have $\varphi=\eta\circ\kappa$ for some $\eta$ in $\H$ and $\kappa$ in $\K$. Then
\begin{align*}
	\Phi_{\la P,\varphi \ra}(X) 
	&= \Phi_{\la P,\eta\circ\kappa \ra}(X) \\
	&= \Phi_{\la \kappa(P),\eta \ra}(X) \qquad\text{($\because$ $X$ is right $\K$-stable)} \\
	&= \Phi_{\la \kappa(P), \incl \ra}(X) \qquad\text{($\because$ $X$ is left $\H$-stable)} \\
	&= \Phi_{\la P, \incl\circ\kappa \ra}(X) \qquad\text{($\because$ $X$ is right $\K$-stable)} ,
\end{align*}
where the last inclusion map is $T \hookrightarrow S$. This shows that \eqref{E:idemp-relation} is equivalent to that
\[
	\Phi_{\la P,\kappa \ra}([T,\id]_S^T\circ X) = \Phi_{\la P,\incl\circ\kappa \ra}(X) = \Phi_{\la P,\incl \ra}(X)
\]
for every $P\leq T$ and $\kappa\in\Hom_\K(P,T)$, or equivalently
\[
	[T,\id]_S^T \circ X = [T,\id]_S^T \circ \omega_\H \circ [T,\incl]_T^S \circ \omega_\K
\]
is left $\K$-stable.  The other equivalent condition is obtained by taking opposite to the above.
\end{proof}

\section{The normal case} \label{sec:Proofs}

We confirm Conjecture~\ref{conj-right-side} positively with an additional normalcy condition.  Let $\F$ be a saturated fusion system on a finite $p$-group $S$.  Let $\H$ be a saturated subsystem of $\F$ on $S$, and let $\K$ be a saturated subsystem of $\F$ on some subgroup $T$ of $S$. Recall, see \cite{AschNormal,Craven}, that $\K$ is \emph{weakly normal} in $\F$ if $T$ is a strongly $\F$-closed subgroup of $S$ and, whenever $\varphi\in\Hom_\F(P,S)$ and $\psi\in\Hom_\K(Q,R)$, for $Q,R\le P$, we have $\varphi\psi\varphi^{-1}\in\Hom_\K(\varphi(Q),\varphi(R))$.

\begin{lem} \label{lem:normal case}
Suppose that $\K$ is weakly normal in $\F$. Then the following are equivalent.
\begin{enumerate}
\item ${}_{S}\F_T = \H\K$
\item $\Aut_\F(T)=\Aut_\H(T)\Aut_\K(T)$. 
\item ${}_{S}\F_T = N_\H(T)\K$
\end{enumerate}
\end{lem}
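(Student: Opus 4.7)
The plan is to prove the cyclic chain $(3) \Rightarrow (1) \Rightarrow (2) \Rightarrow (3)$. The first implication is immediate since every morphism in $N_\H(T)$ is, by definition, a morphism in $\H$, so any factorization $\eta \circ \kappa$ with $\eta$ in $N_\H(T)$ is a fortiori a factorization with $\eta$ in $\H$.

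For $(1) \Rightarrow (2)$, restrict the factorization hypothesis to automorphisms of $T$. Given $\alpha \in \Aut_\F(T)$, viewed as an $\F$-morphism $T \to T$ with source in $T$ and target in $S$, condition (1) gives $\alpha = \eta \circ \kappa$ with $\kappa$ in $\K$ and $\eta$ in $\H$. Since $\K$ is defined on $T$ and $\kappa$ has source $T$, the image $\kappa(T)$ lies in $T$; injectivity between finite sets of the same size then forces $\kappa \in \Aut_\K(T)$, and hence $\eta = \alpha \circ \kappa^{-1} \in \Aut_\H(T)$.

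The main content is $(2) \Rightarrow (3)$. Let $\varphi \in \Hom_\F(P,Q)$ with $P \leq T$ and $Q \leq S$; strong $\F$-closure of $T$ gives $\varphi(P) \leq T$, so we may take $Q \leq T$. The idea is to apply Alperin's fusion theorem to write $\varphi = \rho_n \circ \cdots \circ \rho_1$, where each $\rho_i$ is a restriction of some $\alpha_i \in \Aut_\F(R_i)$. Since every intermediate image of $P$ lies in $T$ and, by strong closure, each $\alpha_i$ restricts to an $\F$-automorphism of $R_i \cap T$, we may assume $R_i \leq T$. We then wish to factor each $\alpha_i$ as $\eta_i \circ \kappa_i$ with $\kappa_i$ in $\K$ and $\eta_i$ a restriction of an element of $\Aut_\H(T) \subseteq N_\H(T)$. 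For $R_i = T$ this is precisely condition (2); for $R_i < T$ one reduces to the case $R_i = T$ by extending, modulo composition with a $\K$-morphism, to an element of $\Aut_\F(T)$, using saturation of $\F$ and weak normality of $\K$. Finally, weak normality allows us to commute $\K$-morphisms past $\H$-morphisms when assembling the pieces: if $\alpha = \eta \circ \kappa$ and $\beta = \eta' \circ \kappa'$ are composable, then $\eta'^{-1} \circ \kappa \circ \eta' \in \K$, so $\kappa \circ \eta' = \eta' \circ \kappa''$ for some $\kappa'' \in \K$, whence $\alpha \circ \beta = (\eta \circ \eta') \circ (\kappa'' \circ \kappa')$ retains the desired shape.

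The principal obstacle is the factorization step for proper subgroups $R < T$: it is not immediate that every $\beta \in \Aut_\F(R)$ decomposes as a restriction of an $N_\H(T)$-morphism composed with a $\K$-morphism. I expect this to follow by induction on $[T:R]$ using the saturation axioms for both $\F$ and $\K$, together with the $\Aut_\F(T)$-action on $\K$ supplied by weak normality.
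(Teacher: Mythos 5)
Your implications $(3)\Rightarrow(1)$ and $(1)\Rightarrow(2)$ are correct and are essentially the only routine parts of the lemma. The problem is $(2)\Rightarrow(3)$, where you yourself flag the ``principal obstacle'': the claim that every $\beta\in\Aut_\F(R)$ for $R<T$ (and more generally every $\varphi\in\Hom_\F(P,S)$ with $P\leq T$) can be written as a restriction of an element of $\Aut_\F(T)$ composed with a morphism in $\K$. This unproved step is not a technicality to be dispatched by a quick induction on $[T:R]$ --- it is precisely the Frattini property of weakly normal subsystems, which is the entire content of the lemma and which the paper disposes of by citing Aschbacher (\cite[\S 3]{AschNormal}): if $\K$ is weakly normal in $\F$ on the strongly $\F$-closed subgroup $T$, then every $\varphi\in\Hom_\F(P,S)$ with $P\leq T$ factors as $\varphi=\chi|_{\kappa(P)}\circ\kappa$ with $\kappa$ in $\K$ and $\chi\in\Aut_\F(T)$. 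Granting that statement, $(2)\Rightarrow(3)$ is immediate: write $\chi=\eta\circ\lambda$ with $\eta\in\Aut_\H(T)\subseteq N_\H(T)$ and $\lambda\in\Aut_\K(T)$, and absorb $\lambda|_{\kappa(P)}\circ\kappa$ into $\K$; no commuting of $\K$-morphisms past $\H$-morphisms is then needed. Without that statement, your Alperin-decomposition scheme stalls exactly where you say it does: Alperin's fusion theorem produces automorphisms of $\F$-essential subgroups $R_i$, and even after restricting to $R_i\cap T$ you have no mechanism for promoting an element of $\Aut_\F(R_i\cap T)$ to one of $\Aut_\F(T)$ modulo $\K$. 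The actual argument (Aschbacher's) uses the extension axiom for the saturated system $\K$ applied to fully normalized subgroups, together with the $\Aut_\F(T)$-invariance of $\K$, and is a genuine theorem rather than a corollary of saturation of $\F$ alone.

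So the structure of your proof is the right one, and it coincides with what the paper intends, but as written it proves only the easy implications; you must either quote the Frattini property from \cite{AschNormal} (or \cite{Craven}) or reproduce its proof, and your sketch does neither. A secondary, fixable point: in $(1)\Rightarrow(2)$ you should note that $\eta$ has source $\kappa(T)=T$ and image $\alpha(T)=T$, so it genuinely lies in $\Aut_\H(T)$ rather than merely in $\Hom_\H(T,S)$; and in your commuting step the conjugate $\eta'\circ\kappa\circ\eta'^{-1}$ (not $\eta'^{-1}\circ\kappa\circ\eta'$, given the order in which you compose) is the one weak normality puts in $\K$.
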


\begin{proof}
This follows from the Frattini property of weakly normal subsystems (see \cite[\S 3]{AschNormal}).
\end{proof}

\begin{lem}\label{lem:idempotent-for-normalizer}
The following are equivalent.
\begin{enumerate}
\item $\F=N_\F(U)$ for some $U\leq S$.
\item $\chi_{_{[P,\varphi]}}(\omega_\F) = 0$ unless $P\geq U$ and $\varphi$ belongs to $N_\F(U)$. 
\end{enumerate}
\end{lem}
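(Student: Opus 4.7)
The plan is to prove each implication separately; the direction $(1) \Rightarrow (2)$ is the more involved one.

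For $(2) \Rightarrow (1)$, I first establish $U \trianglelefteq S$: since $\omega_\F$ has augmentation prime to $p$, $\chi_{[S,\id]}(\omega_\F) \neq 0$, and two-sided $\F$-stability forces $\chi_{[S,\alpha]}(\omega_\F) = \chi_{[S,\id]}(\omega_\F)$ for every $\alpha \in \Aut_\F(S)$; hypothesis (2) then gives $\alpha(U) = U$, in particular for inner conjugations. For a general $\varphi \in \Hom_\F(Q,S)$, Lemma \ref{lem:RephraseStable}(b) yields $\Phi_{\la Q,\varphi\ra}(\omega_\F) = \Phi_{\la Q,\incl\ra}(\omega_\F) \neq 0$. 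Expanding this fixed-point value in the standard basis and invoking (2), some good $[P,\psi]$ in the support of $\omega_\F$ must satisfy $\Phi_{\la Q,\varphi\ra}([P,\psi]) \neq 0$, which produces $s,t \in S$ with $tQt^{-1} \leq P$ and $\varphi = c_s \circ \psi \circ c_t|_Q$. The same formula then defines an extension $\tilde\varphi: QU \to S$ in $\F$, well-defined since $tUt^{-1} = U \leq P$, and satisfying $\tilde\varphi(U) = c_s(\psi(U)) = U$. Hence every $\F$-morphism lies in $N_\F(U)$, and $\F = N_\F(U)$.

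For $(1) \Rightarrow (2)$, assume $\F = N_\F(U)$, so that $U \trianglelefteq S$. Let $V \subseteq A(S,S)\pLoc$ denote the $\Z\pLoc$-submodule spanned by the good basis elements $\{[P,\varphi] : P \geq U,\ \varphi(U) = U\}$; the aim is to show $\omega_\F \in V$. First, $V$ is a subring: applying the double coset formula to a composition $[P',\varphi'] \circ [P,\varphi]$ of two good basis elements yields summands $[\tilde P,\tilde\varphi]$ in which $\tilde P \geq U$ (because $U \trianglelefteq S$ makes $U$ survive every conjugation and intersection occurring in the formula) and $\tilde\varphi(U) = U$ (because each composing factor sends $U$ to $U$). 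Second, a characteristic element for $\F$ can be chosen in $V$: the Broto--Levi--Oliver element \cite{BLO2}, built from the centric linking system, decomposes into basis bisets which, under $\F = N_\F(U)$, all lie in $V$. In the group case $\F = \F_S(G)$ with $U \trianglelefteq G$ this is immediate---the biset $G$ decomposes into components $[S \cap g^{-1}Sg, c_g]$, each of which satisfies $S \cap g^{-1}Sg \geq U$ and $c_g(U) = U$---and the general case is analogous. Third, $\omega_\F$ is obtained from a characteristic element by the iterative $p$-adic limiting procedure of \cite{KR:ClSpectra}, which stays inside $V$ because $V$ is both a subring and $p$-adically closed (being defined by the vanishing of linear coefficient functionals).

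The main obstacle will be the subring property of $V$: checking via the double coset formula that, starting from two good factors, every resulting summand still satisfies $\tilde P \geq U$ and $\tilde\varphi(U) = U$. The two crucial structural inputs are the normality of $U$ in $S$ and the preservation of $U$ by each factor.
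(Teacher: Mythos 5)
Your direction (2) $\Rightarrow$ (1) is correct and is essentially the paper's argument written out by hand: the paper passes from ``the support of $\omega_\F$ lies in $N_\F(U)$'' to $\F \subseteq N_\F(U)$ by citing $\Orb(\omega_\F)=\F$ (Proposition 5.10 of \cite{KRStancu:SFSasIdem}), which is exactly the fixed-point computation you perform (and which you also implicitly use when asserting $\Phi_{\la Q,\incl\ra}(\omega_\F)\neq 0$). The genuine gap is in (1) $\Rightarrow$ (2). Your strategy --- show that the entire construction of $\omega_\F$ takes place inside the subring $V$ spanned by the good basis elements --- stands or falls with the claim that some characteristic element for $\F$ lies in $V$, and your justification of that claim does not hold up. The hypothesis $\F=N_\F(U)$ in the group case $\F=\F_S(G)$ says that $N_G(U)$ controls fusion and $U\trianglelefteq S$; it does \emph{not} say $U\trianglelefteq G$, which is what your ``immediate'' group-case argument actually uses. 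Concretely, take $G=A_5$, $p=2$, $S=U=V_4$: here $\F_S(G)=\F_S(A_4)=N_{\F_S(G)}(U)$, yet the characteristic biset $G$ contains orbits $[S\cap{}^gS,c_{g^{-1}}]$ with $S\cap{}^gS=1\not\geq U$ (take $g$ of order $5$), so this natural characteristic element is not in $V$; one must pass to the biset $N_G(U)=A_4$ instead. Likewise, the Broto--Levi--Oliver construction runs an induction over \emph{all} subgroups of $S$ and nothing prevents correction terms $[P,\varphi]$ with $P\not\geq U$ from being introduced; making your approach work would require redoing that construction restricted to the poset of subgroups containing $U$ (using that stability at $P$ follows from stability at $PU$), which is a real additional argument, not an analogy.

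Your steps (a) and (c) are sound: once $U\trianglelefteq S$ is known, the double coset formula does show $V$ is closed under composition, and $V$ is a direct summand spanned by basis elements, so the $p$-adic idempotent-extraction of \cite{KR:ClSpectra} would not leave it. But the paper avoids the problematic existence claim altogether by working with $\omega_\F$ itself rather than with a characteristic element of controlled support: it splits $\omega_\F=\omega_{\geq U}+\omega_{\not\geq U}$ according to whether the source subgroup contains $U$, shows via the double coset formula (and the observation that for $\F=N_\F(U)$ right stability need only be tested against $[Q,\psi]$ with $Q\geq U$) that each summand is separately right $\F$-stable, and then invokes the linear characterization of $\omega_\F$ from Proposition 5.6 of \cite{KR:ClSpectra} to force $\omega_\F=\omega_{\geq U}$; the condition on $\varphi$ then comes for free from $\F$-generation. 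You should either adopt that route or supply the missing ``relative'' version of the characteristic-element construction.
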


\begin{proof}
(1) $\Rightarrow$ (2): Let us use the notation in Proposition 5.6 of \cite{KR:ClSpectra}.  In particular
\[
	\omega=\omega_\F = \sum_{0\leq i\leq n}\sum_{0\leq j\leq m_i} c_{ij}[P_i,\varphi_{ij}],\qquad c_{ij} \in \Z_{(p)}.
\]
Write 
\[
	\omega = \omega_{\geq U} + \omega_{\not\geq U}
\]
where
\begin{gather*}
	\omega_{\geq U} = \sum_{i,j \text{ s.t.\ } P_i\geq U} c_{ij}[P_i,\varphi_{ij}],\\
	\omega_{\not\geq U} = \sum_{i,j \text{ s.t.\ } P_i\not\geq U} c_{ij}[P_i,\varphi_{ij}].
\end{gather*}
We claim that both $\omega_{\geq U}$ and $\omega_{\not\geq U}$ are right $\F$-stable.  Since $\F=N_\F(U)$, an element $X\in A(S,S)_{(p)}$ is right $\F$-stable if and only if
\[
	X \circ [Q,\psi]^S_Q = X \circ [Q,\incl]^S_Q
\]
for all $[Q,\psi]^S_Q$ with $Q\geq U$ and $\psi$ in $\F$. Suppose $U\leq Q\leq S$, $\psi\in\Hom_\F(Q,S)$. Then
\[
	[P_i,\varphi_{ij}]^S_S \circ [Q,\psi]^S_Q = \sum_{x \in [P_i \backslash S / \psi(Q)]} [\psi^{-1}(\psi(Q) \cap P_i^{x}), \varphi_{ij} \circ c_x \circ \psi ]^S_Q,
\]
and $\psi^{-1}(\psi(Q) \cap P_i^{x}) \geq U$ if and only if $P_i \geq U$.  This shows that the right $\F$-stability of $\omega$ implies the right $\F$-stability of $\omega_{\geq U}$ and $\omega_{\not\geq U}$.

Now the right $\F$-stability of $\omega_{\geq U}$ implies that $\omega_{\geq U}$ satisfies the equations
\[
	\sum_{k,l} c_{kl} \left(|[P_k,\varphi_{kl}]^{\Delta_i^j}|-|[P_k,\varphi_{kl}]^{\Delta_i^j}|\right) = 0 \tag{$ij$}.
\]
Also, by Lemma 5.5 of \cite{KR:ClSpectra}, $\omega_{\geq U}$ satisfies the equations
\begin{gather*}
	\sum_{j=0}^{m_0} c_{0j} = 1 \tag{$00$},\\
	\sum_{j=0}^{m_i} c_{ij} = 0 \tag{$i0$}.
\end{gather*}
Thus, by Proposition 5.6 of \cite{KR:ClSpectra}, we have $\omega=\omega_{\geq U}$.

(2) $\Rightarrow$ (1): Suppose (2).  Then 
\[
	\PreOrb(\omega_\F) \subseteq N_\F(U),
\]
and so
\[
	\Orb(\omega_\F) \subseteq N_\F(U).
\]
Since $\Orb(\omega_\F)=\F$ by Proposition 5.10 of \cite{KRStancu:SFSasIdem}, we have $\F=N_\F(U)$.
\end{proof}


We can now prove our main result.

\begin{proof}[Proof of Theorem \ref{thm:main}]
First suppose that ${}_{S}\F_T = \H\K$.  By Proposition~\ref{prop-right-side}, we need to show that $X:=\omega_\K \circ [T,\id]_S^T \circ \omega_\H \circ [T,\incl]_T^S$ is right $\K$-stable.  By Lemma~\ref{lem:normal case}, we may assume that $\H=N_\H(T)$.  Hence, by Lemma~\ref{lem:idempotent-for-normalizer}, we have 
\[
	\omega_\H = \sum_{[Q,\psi]}c_{[Q,\psi]}[Q,\psi]
\]
where $c_{[Q,\psi]}\in\Z_{(p)}$ and the sum is taken over the $[Q,\psi]$ with $Q \geq T$ and $\psi$ in $\H$. Using that $T$ is strongly $\F$-closed, we get
\begin{align*}
	X
	&=\omega_\K \circ [T,\id]_S^T \circ \sum_{[Q,\psi]}c_{[Q,\psi]}[Q,\psi]^S_S \circ [T,\incl]_T^S\\
	&=\sum_{[Q,\psi]} c_{[Q,\psi]} \omega_\K \circ [T,\id]_S^T \circ [Q,\psi]^S_S \circ [T,\incl]_T^S\\
	&=\sum_{[Q,\psi]} \sum_{x \in [S/\psi(Q)]} c_{[Q,\psi]} \omega_\K \circ [T,c_x\circ\psi]^T_S \circ [T,\incl]_T^S\\	
	&=\sum_{[Q,\psi]} \sum_{x \in [S/\psi(Q)]} \sum_{y\in[S/T]} c_{[Q,\psi]} \omega_\K \circ [T,c_x\circ\psi\circ c_y]^T_T \, .
\end{align*}
Since $\K$ is weakly normal in $\F$, using Theorem 8.2 of \cite{KRStancu:SFSasIdem} we have
\begin{align*}
	X\circ\omega_\K
	&=\sum_{[Q,\psi]} \sum_{x \in [S/\psi(Q)]} \sum_{y\in[S/T]} c_{[Q,\psi]} \omega_\K \circ [T,c_x\circ\psi\circ c_y]^T_T\circ\omega_\K \\
	&=\sum_{[Q,\psi]} \sum_{x \in [S/\psi(Q)]} \sum_{y\in[S/T]} c_{[Q,\psi]} \omega_\K \circ\omega_\K \circ [T,c_x\circ\psi\circ c_y]^T_T \\
	&=\sum_{[Q,\psi]} \sum_{x \in [S/\psi(Q)]} \sum_{y\in[S/T]} c_{[Q,\psi]} \omega_\K \circ [T,c_x\circ\psi\circ c_y]^T_T = X,\\
\end{align*}
as desired.

Conversely, suppose that $\omega_\F \circ [T,\incl]_T^S = \omega_\H \circ [T,\incl]_T^S \circ \omega_\K$.  Let $\varphi\in\Hom_\F(P,S)$ for some $P\leq T$.  Then $\Phi_{\la P,\varphi \ra}(\omega_\F) \neq 0$.  Clearly $\Phi_{\la P,\varphi \ra}(\omega_\F) = \Phi_{\la P,\varphi \ra}(\omega_\F \circ [T,\incl]^S_T)$.  So by assumption we have $\Phi_{\la P,\varphi \ra}(\omega_\H \circ [T,\incl]_T^S \circ \omega_\K) \neq 0$.  But $\omega_\H \circ [T,\incl]_T^S \circ \omega_\K$ is a linear combination of terms of the form $[Q,\eta]^S_S \circ [T,\incl]^S_T \circ [R,\kappa]^T_T$ where $Q\leq S$, $R\leq T$, $\eta$ in $\H$ and $\kappa$ in $\K$.  Applying  the double coset formula to those terms, we see that $\varphi = \eta|_{x\kappa(P)x^{-1}} \circ c_x \circ \kappa|_P$ for some $\eta$ in $\H$, $\kappa$ in $\K$ and $x\in S$ and we are done.
\end{proof}

\section{Mackey functors for fusion systems} \label{sec:Mackey}
The composition pairing described in \ref{subsec:Burnside} allows us to form the $\Z$-linear category $\Burnside$, called the \emph{Burnside category}, with objects the finite groups and morphism sets $\mor_\Burnside(G,H) = A(G,H)$. We take a \emph{Mackey functor} to mean a functor $M \colon \Burnside \to \Z\module$. This functor can be contravariant or covariant; we will focus on the contravariant case, leaving the necessary adjustments for the covariant case to the reader. A popular example of a contravariant Mackey functor are the cohomology functors $H^k(-,A)$, $k \geq 0$ with coefficients in any abelian group $A$.

The notion of Mackey functor used here is sometimes called a globally-defined Mackey functor. (A ``classical'' Mackey functor is defined on the subgroups of a given group). We also need to consider more restrictive functors, defined only on certain groups. Let $\Burnside_p$ be the full subcategory of $\Burnside$ with objects the finite $p$-groups. A \emph{$p$-defined Mackey functor} is a functor $\Burnside_p \to \Z\module$.

The \emph{$p$-local Burnside category} is the $\Z\pLoc$-linear category $\Burnside\Z\pLoc$ obtained by $p$-localizing the morphism modules in $\Burnside$. A \emph{$p$-local Mackey functor} is a functor $\Burnside\Z\pLoc \to \Z\pLoc\module$, which we observe is the same as a functor $\Burnside \to \Z\pLoc\module$. Similarly, a \emph{$p$-local, $p$-defined Mackey functor} is a functor $\Burnside_p\Z\pLoc \to \Z\pLoc\module$

A Mackey functor $M$ is \emph{$p$-projective} if for every finite group $G$, the canonical map 
\[ \bigoplus_{P \leq G} M(P) \to M(G) \, ,\]
where the sum runs over all $p$-subgroups, is a surjection. By Dress \cite{Dress}, $p$-projectivity implies that $M(G)$ is determined $p$-locally, which in the language of fusion systems means it is determined by its values on the fusion system $\F_S(G)$ when $S$ is a Sylow subgroup of $G$. More precisely, the composite 
\[ M(G) \xrightarrow{res_S^G} M(S) \xrightarrow{tr_S^G} M(G) \]
is an isomorphism that factors isomorphically through the submodule of elements in $M(S)$ that are $\F_S(G)$-stable in the sense of definition \ref{def:MStable} below. The opposite composite 
\[ M(S) \xrightarrow{tr_S^G} M(G) \xrightarrow{res_S^G} M(S) \]
is the map induced by the $(S,S)$-biset $G$ and has image isomorphic to $M(G)$. Thus, when $M$ is $p$-projective, $M(G)$ is determined by $M(S)$ and the $(S,S)$-biset $G$.

\begin{defn} \label{def:MStable}
Let $\F$ be a fusion system on a finite $p$-group $S$. For a (contravariant) $p$-defined Mackey functor $M$, we say that an element $x \in M(S)$ is \emph{$\F$-stable} if, for every $P \leq S$ and every $\varphi \in \Hom_\F(P,S)$, we have $M(\varphi)(x) = M(\incl)(x)$ in $M(P)$. 
\end{defn}

The preceding discussion suggests the following approach for extending Mackey functors to fusion systems. Given a $p$-local, $p$-defined Mackey functor $M$ and a fusion system $\F$ on a finite $p$-group $S$, define $M(\F)$ as the module of $\F$-stable elements in $M(S)$. We can then regard the inclusion of $M(\F)$ in $M(S)$ as a restriction map, which we denote by $res_S^\F$, and the following theorem allows us to restrict the map $M(\omega_\F) \colon M(S) \to M(S)$ induced by the characteristic idempotent to a map $ M(S) \xrightarrow{tr^\F_S} M(\F)$ that plays the role of transfer.

\begin{thm}[\cite{KRStancu:SFSasIdem}]
Let $M$ be a (contravariant) $p$-local, $p$-defined Mackey functor, and let $\F$ be a saturated fusion system on a finite $p$-group $S$. An element $x \in M(S)$ is $\F$-stable if and only if $M(\omega_\F)(x) = x$.
\end{thm}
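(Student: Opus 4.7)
The plan is to prove the two directions separately, with the $\Rightarrow$ direction being the substantial one.

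For the $\Leftarrow$ direction, suppose $M(\omega_\F)(x) = x$. Since $\omega_\F$ is right $\F$-stable, the proposition from \cite{KR:ClSpectra} recalled above gives $\omega_\F \circ [P,\varphi]_P^S = \omega_\F \circ [P,\incl]_P^S$ in $A(P,S)\pLoc$ for every $\varphi \in \Hom_\F(P,S)$. Applying $M$ contravariantly, evaluating at $x$, and using the hypothesis, I would immediately obtain $M(\varphi)(x) = M(\incl)(x)$, which is $\F$-stability.

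For the $\Rightarrow$ direction, I would use a Yoneda-style reformulation. Each $x \in M(S)$ corresponds via $\tau_x(Y) = M(Y)(x)$ to a natural transformation $\tau_x \colon A(-,S)\pLoc \to M$; the $\F$-stability of $x$ is equivalent to $\tau_x$ annihilating the sub-Mackey-functor $N \subseteq A(-,S)\pLoc$ generated by the relations $[P,\varphi]_P^S - [P,\incl]_P^S$ with $\varphi \in \Hom_\F(P,S)$, while the equation $M(\omega_\F)(x) = x$ is equivalent to $\tau_x([S,\id] - \omega_\F) = 0$. The hard direction thus reduces to the structural statement
\[ [S,\id] - \omega_\F \in N(S). \]

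This is the main obstacle. My plan of attack is to use the direct-sum decomposition $A(-,S)\pLoc = \operatorname{Im}(\omega_\F \circ -) \oplus \ker(\omega_\F \circ -)$ arising from the idempotence of $\omega_\F$. The inclusion $N \subseteq \ker(\omega_\F \circ -)$ is immediate from right $\F$-stability of $\omega_\F$, since each generator of $N$ is killed by left-composition with $\omega_\F$. For the harder reverse inclusion, I would expand $\omega_\F = \sum c_{[P,\varphi]} [P,\varphi]_S^S$ using that $\omega_\F$ is $\F$-generated, factor $[P,\varphi]_S^S = [P,\varphi]_P^S \circ [P,\id]_S^P$ and identify each term with $[P,\incl]_S^S$ modulo $N(S)$, and then use the idempotent equation $\omega_\F^2 = \omega_\F$ combined with the augmentation normalization $\epsilon(\omega_\F) = 1$ to match the resulting sum with $[S,\id]$ modulo $N(S)$, thereby establishing $[S,\id] - \omega_\F \in N(S)$.
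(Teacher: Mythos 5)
This theorem is quoted in the paper from \cite{KRStancu:SFSasIdem} without proof, so there is no in-paper argument to compare yours against; I can only assess your proposal on its own terms.

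Your $\Leftarrow$ direction is correct, and your Yoneda reduction of the $\Rightarrow$ direction to the single universal identity $[S,\id]-\omega_\F\in N(S)$ is also valid (that identity is in fact equivalent to the theorem, as one sees by applying the theorem to the quotient functor $A(-,S)\pLoc/N$). Two things should be flagged, though. First, the direct-sum decomposition buys you nothing: since $N$ is closed under precomposition and $\ker(\omega_\F\circ -)(Q)=\{([S,\id]-\omega_\F)\circ Z : Z\in A(Q,S)\pLoc\}$, the ``reverse inclusion'' $\ker(\omega_\F\circ -)\subseteq N$ is literally equivalent to $[S,\id]-\omega_\F\in N(S)$; phrasing the goal this way is a restatement, not progress. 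Second, and more seriously, the final step is where the entire content lies and your sketch does not supply it. Writing $\omega_\F=\sum_{i,j}c_{ij}[P_i,\varphi_{ij}]$ and factoring $[P_i,\varphi_{ij}]_S^S=[P_i,\varphi_{ij}]_{P_i}^S\circ[P_i,\id]_S^{P_i}$ does give $\omega_\F\equiv\sum_i\bigl(\sum_j c_{ij}\bigr)[P_i,\incl]_S^S \pmod{N(S)}$, but to conclude you need the coefficient sums to satisfy $\sum_j c_{0j}=1$ for $P_0=S$ and $\sum_j c_{ij}=0$ for $i>0$. This is not a formal consequence of ``$\omega_\F^2=\omega_\F$ and $\epsilon(\omega_\F)=1$'': the augmentation yields only the single relation $\sum_i(\sum_j c_{ij})\,|S/P_i|=1$, and $N(S)$ is only a right ideal, so you cannot square the congruence. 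The needed coefficient identities are a genuine structural theorem about the characteristic idempotent --- precisely the equations $(00)$ and $(i0)$ of Lemma 5.5 of \cite{KR:ClSpectra}, which this paper itself invokes in the proof of Lemma \ref{lem:idempotent-for-normalizer}. If you cite that lemma, your argument closes; as written, the crucial step is asserted rather than proved.
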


Observe that, by construction, the composite 
\[ M(\F) \xrightarrow{res_S^\F} M(S) \xrightarrow{tr_S^\F} M(\F) \] 
is the identity, and the opposite composite
\[ M(S) \xrightarrow{tr_S^\F} M(\F) \xrightarrow{res_S^\F} M(S)\]
is the idempotent map $M(\omega_\F)$. It follows that 
\[ M(\omega_\F) \circ res_S^\F = res_S^\F \quad \text{and} \quad tr_S^\F \circ M(\omega_\F) = tr_S^\F \, ,\]
which we will use in the proof of Corollary \ref{cor:Mackey} given below.

\begin{rmk}
If one starts with $p$-local Mackey functor $M$, restricts to a $p$-local, $p$-defined Mackey functor and then extends to fusion systems, one should not expect $M(G)$ to be isomorphic to $M(\F_S(G))$ for a finite group $G$ with Sylow $p$-subgroup $S$ unless $M$ is $p$-projective. In some sense this process results in a $p$-projective version of $M$, which retains only information detectable on $p$-groups.
\end{rmk}

For a fusion system $\F$ on a finite $p$-group $S$ and a subsystem $\K$ on a subgroup $T \leq S$, one obtains transfer and restriction maps via the composites
\[ tr_\K^\F \colon M(\K) \xrightarrow{res_T^\K} M(T) \xrightarrow{tr^S_T} M(S) \xrightarrow{tr_S^\F} M(\F) \, ,\]
and
\[ res_\K^\F \colon M(\F) \xrightarrow{res_S^\F} M(S) \xrightarrow{res_T^S} M(T) \xrightarrow{tr_T^\K} M(\K) \, . \] 
The arguments in Theorems 7.9 and 8.6 of \cite{KR:ClSpectra} show that these behave well under composition (so $tr_\K^\F \circ tr_\H^\K = tr_\H^\F$ and $res_\H^\K \circ res_\K^\F = res_\H^\F$ when $\H$ is a subsystem of $\K$).

\begin{proof}[Proof of Corollary \ref{cor:Mackey}]
From Theorem \ref{thm:main} we have
\[ tr_T^\K \circ M([T,\incl]_T^S) \circ M(\omega_\F) \circ res_S^\H = tr_T^\K \circ M(\omega_\K) \circ M([T,\incl]_T^S) \circ M(\omega_\H) \circ res_S^\H \, .\]
Rewriting on the left, we get
\begin{align*} 
   tr_T^\K \circ M([T,\incl]) \circ M(\omega_\F) \circ res_S^\H 
&= tr_T^\K \circ res_T^S \circ (res_S^\F \circ tr_S^\F) \circ res_S^\H \\
&= (tr_T^\K \circ res_T^S \circ res_S^\F) \circ (tr_S^\F \circ res_S^\H) \\
&= res_\K^\F \circ tr_\H^\F \, ,
\end{align*}
while rewriting on the right yields
\begin{align*}
  tr_T^\K \circ M(\omega_\K) \circ M([T,\incl]_T^S) \circ M(\omega_\H) \circ res_S^\H
&= (tr_T^\K \circ M(\omega_\K)) \circ res_T^S \circ (M(\omega_\H) \circ res_S^\H) \\
&= tr_T^\K \circ res_T^S \circ res_S^\H\\
&= tr_T^\K \circ res_T^\H \, ,
\end{align*}
and so the two maps agree.
\end{proof}


\end{document}